\newtheorem{theorem}{Theorem}[section]
\newtheorem{lemma}[theorem]{Lemma}
\theoremstyle{definition}
\theoremstyle{remark}
\numberwithin{equation}{section}
\renewcommand{\epsilon}{\varepsilon}
\renewcommand{\phi}{\varphi}
\renewcommand{\kappa}{\varkappa}
\begin{document}

\title{Note on illuminating constant width bodies}

\author{Alexey Glazyrin{$^\spadesuit$}}

\thanks{{$^\spadesuit$} Partially supported by the NSF grant DMS-2054536}

\address{Alexey Glazyrin, School of Mathematical \& Statistical Sciences, The University of Texas Rio Grande Valley, Brownsville, TX 78520, USA}
\email{alexey.glazyrin@utrgv.edu}


\begin{abstract}
Recently, Arman, Bondarenko, and Prymak constructed a constant width body in $\mathbb{R}^n$ whose illumination number is exponential in $n$. In this note, we improve their bound by generalizing the construction. In particular, we construct a constant width body in $\mathbb{R}^n$ whose illumination number is at least $(\tau+o(1))^n$, where $\tau\approx 1.047$.
\end{abstract}

\maketitle

\section{Introduction}

A boundary point $x$ of a convex $n$-dimensional body $B$ is said to be illuminated by a direction (unit vector) $\ell$ if the ray from $x$ in the direction of $\ell$ intersects the interior of $B$. The set of directions $\mathcal{L}$ illuminate $B$ if each boundary point of $B$ is illuminated by some direction from $\mathcal{L}$. A natural question is to determine the minimal size of an illuminating set, also called \textit{an illumination number} \cite{bol60}, for a given $B$ or for a given class of convex $n$-dimensional bodies.

In \cite{sch88}, Schramm showed that the illumination number of any $n$-dimensional body of constant width is no greater than $(\sqrt{3/2}+o(1))^n$. The question of existence of bodies of constant width with exponential illumination numbers (see \cite[Problem 3.3]{kal15}) was recently answered in the affirmative by Arman, Bondarenko, and Prymak \cite{arm23}. They constructed a body of constant width whose illumination number is at least $(\cos\pi/14 + o(1))^{-n}$.

The construction of Arman, Bondarenko, and Prymak is based on the union of congruent right spherical cones inscribed in the unit sphere. Cones are chosen in such a manner that the diameter of the union is equal to the diameter of each cone. Then there exists a body of constant width of the same diameter containing this union of cones \cite[Theorem 54, p. 126]{egg58}. Choosing the apexes of cones according to the economical covering of the sphere constructed by B{\"o}r{\"o}czky and Wintsche \cite{bor03} and estimating the number of apexes that can be illuminated by the same direction, Arman, Bondarenko, and Prymak show that there is a body of constant width with the illuminating number exponential in dimension. Their construction allows for the base of the exponential bound equal to $(\cos(\pi/14))^{-1}\approx 1.026$.

The main idea of this note is to generalize their construction and, by this, gain more freedom in choosing apexes of cones. For our construction, we consider right spherical cones whose apexes lie in the unit sphere but whose bases belong to a concentric sphere of a possibly different radius $R$. We fix the radius $R$, the distance from the apex to the base $d$, the angle $\alpha$ between the axis and the generatrix of a cone, and the spherical radius $\beta$ of the base sphere, see Figure \ref{fig:cone}.

\begin{figure}
\centering
\begin{subfigure}{.5\textwidth}
\centering
\includegraphics[width=.8\linewidth]{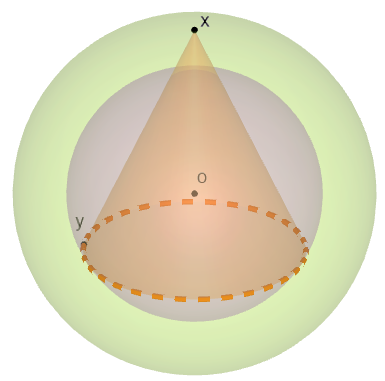}
\end{subfigure}%
\begin{subfigure}{.5\textwidth}
\centering
\includegraphics[width=.8\linewidth]{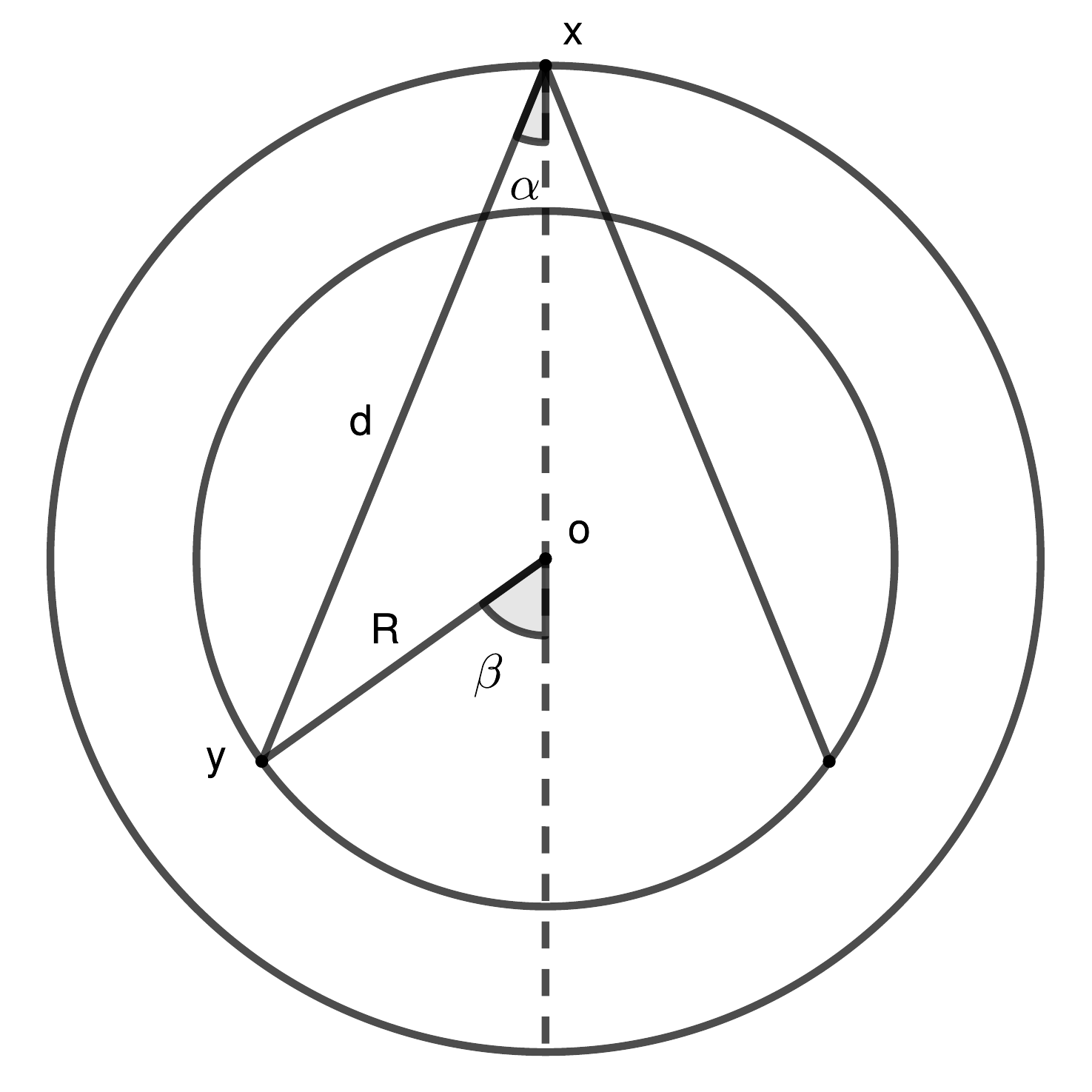}
\end{subfigure}
\caption{Cone and its axial section}
\label{fig:cone}
\end{figure}

These parameters are not independent but a separate notation for each of them will be more convenient for us. We always consider only cones whose diameter is $d$, that is, the condition $2R\sin\beta\leq d$ is always satisfied.

For each $x\in\mathbb{S}^{n-1}$, we denote by $Q(x)$ the cone with the fixed parameters $R, d, \alpha, \beta$ whose apex is $x$. For a subset $X\subset \mathbb{S}^{n-1}$, we denote the union $\bigcup\limits_{x\in X} Q(x)$ by $\mathcal{W}(X)$.

For $x,y\in\mathbb{S}^{n-1}$ the spherical distance between $x$ and $y$ is denoted by $\theta(x,y)$. For $0<\alpha<\pi/2$, the spherical cap with center $x$ and spherical radius $\alpha$ is denoted by $C(x,\alpha)$.

The following lemma describes sufficient conditions for the diameter of $\mathcal{W}(X)$ to be $d$, thus extending Lemma 3 in \cite{arm23}.

\begin{lemma}\label{lem:diam}
The diameter of $\mathcal{W}(X)$ is $d$ if the following conditions are satisfied for each pair of points $x,y\in X$:

\begin{enumerate}

\item\label{cond:1} $2\sin(\theta(x,y)/2) \leq d$;

\item\label{cond:2} $\theta(x,y)\geq 2\beta$;

\item\label{cond:3} $2R\leq d$ or $\theta(x,y)\leq 2 \arcsin \frac d {2R} - 2\beta$.

\end{enumerate}
\end{lemma}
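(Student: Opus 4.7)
The plan is to bound $\lvert p - q\rvert$ by $d$ for every pair $p \in Q(x)$, $q \in Q(y)$ with $x, y \in X$. When $x = y$ the bound will be immediate from $\mathrm{diam}(Q(x)) = d$, which uses $2R\sin\beta \leq d$ (so that the cone's slant edge exceeds its base diameter). For $x \neq y$, I would exploit that $\lvert p - q\rvert$ is convex in each argument, so its maximum over $Q(x) \times Q(y)$ is attained at a pair of extreme points. The extreme points of $Q(x)$ are the apex $x$ together with the boundary circle of the base, i.e., points of the form $R\hat{p}$ with $\theta(\hat p, -x) = \beta$, leaving three subcases, each matched to one of the three hypotheses.

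The apex-apex case is just (\ref{cond:1}) restated: $\lvert x - y\rvert = 2\sin(\theta(x,y)/2) \leq d$. The base-base case would ask to bound $\lvert R\hat p - R\hat q\rvert = 2R\sin(\theta(\hat p, \hat q)/2)$, and the spherical triangle inequality gives $\theta(\hat p, \hat q) \leq \theta(\hat p, -x) + \theta(-x, -y) + \theta(-y, \hat q) = \theta(x,y) + 2\beta$. Condition (\ref{cond:3}) then closes the gap: if $2R \leq d$ the distance is at most $2R \leq d$ trivially, while otherwise the inequality $\theta(x,y) + 2\beta \leq 2\arcsin(d/(2R))$ rearranges, after applying the monotone sine, to exactly $2R\sin((\theta(x,y) + 2\beta)/2) \leq d$.

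The apex-base case looks the most interesting. I would first record the slant-height identity $d^2 = 1 + R^2 + 2R\cos\beta$, a consequence of $d$ being the single-cone diameter together with $2R\sin\beta \leq d$ (so the diameter is realized by an apex-to-boundary segment rather than by a base chord). With $q = R\hat q$ on the boundary of the base of $Q(y)$, the desired bound $\lvert x - q\rvert^2 = 1 + R^2 - 2R\langle x, \hat q\rangle \leq d^2$ will reduce to $\langle x, \hat q\rangle \geq -\cos\beta$, that is, $\theta(\hat q, -x) \geq \beta$. Condition (\ref{cond:2}) should deliver exactly this via the spherical triangle inequality: $\theta(\hat q, -x) \geq \theta(-x, -y) - \theta(\hat q, -y) = \theta(x,y) - \beta \geq 2\beta - \beta = \beta$. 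The main obstacle will be identifying the slant-height identity and matching each hypothesis to its subcase; each individual estimate is then a one-line calculation.
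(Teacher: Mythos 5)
Your proposal is correct and follows essentially the same route as the paper: reduce the diameter computation to the three pairings apex--apex, apex--base, base--base (matching conditions (\ref{cond:1}), (\ref{cond:2}), (\ref{cond:3}) respectively), then close each case via the law of cosines and the spherical triangle inequality. You make explicit the reduction to extreme points and the slant-height identity $d^2 = 1 + R^2 + 2R\cos\beta$, both of which the paper uses implicitly when it observes that the base cap of $Q(x)$ on the $R$-sphere consists exactly of the points at distance greater than $d$ from $x$.
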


For the main result of the note we also use two lemmas from \cite{arm23}.

\begin{lemma}\label{lem:illum}\cite[Lemma 1]{arm23}
Suppose $0<\alpha\leq\pi/6$. $K$ is a convex body in $\mathbb{R}^n$ whose diameter is $d$. For some $x\in\mathbb{S}^{n-1}$, $Q(x)\subset K$ and $x$ is on the boundary of $K$. Then $x$ is illuminated by $\ell$ only if $\ell\in C(-x,\pi/2 - \alpha)$.
\end{lemma}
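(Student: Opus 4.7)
The plan is to translate the illumination condition at $x$ into a family of outward-normal inequalities, and then to extract explicitly enough outward normals at $x$---one for each rim point of the base of $Q(x)$---to force $\ell$ into $C(-x,\pi/2-\alpha)$.

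First, I would invoke the standard criterion: a direction $\ell$ illuminates a boundary point $x$ of a convex body $K$ if and only if $\ell\cdot n<0$ for every outward unit normal $n$ to $K$ at $x$. Next, I would produce such normals from the rim of the base of $Q(x)$ as follows. Fix any rim point $p$ of this base. Then $p\in Q(x)\subset K$, and because $\mathrm{diam}(K)=d$, we have $K\subset \overline{B}(p,d)$; since $|x-p|=d$ (the length of a generatrix), $x\in \partial \overline{B}(p,d)$, and therefore the tangent hyperplane to $\overline{B}(p,d)$ at $x$ supports $K$ at $x$ with outward unit normal $n_p=(x-p)/d$. A direct computation using the apex angle $\alpha$ yields $n_p\cdot x=\cos\alpha$, so $n_p=\cos\alpha\cdot x+\sin\alpha\cdot u_p$ for a unit vector $u_p\in x^\perp$. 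By the rotational symmetry of $Q(x)$ about its axis, as $p$ ranges over the rim the vector $u_p$ ranges over the entire unit sphere of $x^\perp$.

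Assuming $\ell$ illuminates $x$, the inequality $\ell\cdot n_p<0$ reads $\cos\alpha\,(\ell\cdot x)+\sin\alpha\,(u_p\cdot\ell)<0$ for every unit $u_p\in x^\perp$. Taking $u_p$ aligned with the component of $\ell$ perpendicular to $x$ gives $\cos\alpha\,(\ell\cdot x)+\sin\alpha\sqrt{1-(\ell\cdot x)^2}\leq 0$; an elementary manipulation (move the first term to the right, square, use $\cos^2\alpha+\sin^2\alpha=1$) yields $\ell\cdot x\leq -\sin\alpha$, equivalently $\ell\cdot(-x)\geq\cos(\pi/2-\alpha)$, which is exactly $\ell\in C(-x,\pi/2-\alpha)$.

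The main conceptual step is recognizing that the diameter-$d$ hypothesis on $K$ is precisely what converts each rim point of $Q(x)$ into an explicit outward normal at $x$; the rest is a one-variable optimization over the rim. The assumption $\alpha\leq\pi/6$ is not actually used in the argument above---only $0<\alpha<\pi/2$ (so that $\sin\alpha,\cos\alpha>0$) enters---so the hypothesis presumably serves subsequent lemmas rather than this one.
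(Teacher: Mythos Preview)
The paper does not give its own proof of this lemma; it simply quotes \cite[Lemma~1]{arm23} and remarks that the argument there carries over unchanged to the present setting. Your argument is correct and is precisely the natural one (and essentially what \cite{arm23} does): each rim point $p$ of the base satisfies $|x-p|=d$, and the diameter hypothesis $\mathrm{diam}(K)=d$ together with $p\in K$ gives $K\subset\overline{B}(p,d)$, hence a supporting hyperplane of $K$ at $x$ with outward normal $(x-p)/d$; optimizing over the rim then yields $\ell\cdot x\le -\sin\alpha$. Two small remarks: you only use the easy direction of the illumination criterion (if $\ell$ illuminates $x$ then $\ell\cdot n<0$ for every outward normal $n$), so there is no need to invoke the full equivalence; and your observation that the bound $\alpha\le\pi/6$ plays no role in this particular argument is correct---only $0<\alpha<\pi/2$ is used.
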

Lemma \ref{lem:illum} is formulated in \cite{arm23} for cones inscribed in the unit sphere but the proof works in our case just as well.

\begin{lemma}\label{lem:prob}\cite[Lemma 2]{arm23}
Suppose $0<\psi<\phi<\pi/2$ are fixed. Then for every positive integer $n$ there exists $X\subset\mathbb{S}^{n-1}$ with $|X|=\left(\frac {1+o(1)} {\sin\phi} \right)^n$ such that $\psi \leq \theta(x,y) \leq \pi-\psi$ for each pair $x,y\in X$ and every point of $\mathbb{S}^{n-1}$ is contained in at most $O(n\log n)$ spherical caps $C(x,\phi)$, for all $x\in X$.
\end{lemma}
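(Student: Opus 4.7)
The plan is to construct $X$ by pruning a Böröczky--Wintsche covering to enforce the angular separation conditions, using the bounded multiplicity of the covering to show that only a subexponentially small fraction of points must be discarded.

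The starting point is the Böröczky--Wintsche covering theorem, which produces a set $X_0 \subset \mathbb{S}^{n-1}$ with $|X_0| = \bigl((1+o(1))/\sin\phi\bigr)^n$ such that $\bigcup_{x\in X_0} C(x,\phi) = \mathbb{S}^{n-1}$ and every point of $\mathbb{S}^{n-1}$ lies in at most $M = O(n\log n)$ of the caps $\{C(x,\phi) : x \in X_0\}$. The upper bound on $|X_0|$ comes directly from their construction, while the matching lower bound follows from the covering property together with the standard asymptotic $\sigma(C(\cdot,\phi)) = \Theta\bigl(n^{-1/2}(\sin\phi)^{n-1}\bigr)$ for the normalized spherical measure; the polynomial-in-$n$ discrepancy between these is absorbed into the $(1+o(1))^n$ factor.

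Next, the key observation is that the multiplicity bound controls the number of ``bad'' pairs. Fix $x\in X_0$. If $y\in X_0$ satisfies $\theta(x,y)<\psi$, then since $\psi<\phi$ we have $x\in C(y,\phi)$, so at most $M$ such $y$ exist by the multiplicity hypothesis applied at the point $x$. If instead $\theta(x,y)>\pi-\psi$, then $\theta(-x,y)=\pi-\theta(x,y)<\psi<\phi$, so $-x\in C(y,\phi)$, and again at most $M$ such $y$ exist by the multiplicity hypothesis at $-x$. Consequently, the ``conflict graph'' $G$ on the vertex set $X_0$ whose edges are the pairs $\{x,y\}$ with $\theta(x,y)\notin[\psi,\pi-\psi]$ has maximum degree at most $2M$.

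A greedy argument (or Turán's inequality) now yields an independent set $X\subset X_0$ of $G$ with $|X|\geq |X_0|/(2M+1)$. By construction, every pair $x,y\in X$ satisfies $\psi \leq \theta(x,y)\leq \pi-\psi$. The multiplicity bound is inherited automatically, since deleting vertices can only decrease the number of caps covering any given point. Finally, $|X| = \bigl((1+o(1))/\sin\phi\bigr)^n$ because the factor $2M+1 = O(n\log n)$ is subexponential and is absorbed into the $(1+o(1))^n$ term. The main obstacle is packaging the Böröczky--Wintsche statement so that both the sharp cardinality and the $O(n\log n)$ pointwise multiplicity are visible simultaneously; once this is in hand, the remaining argument is a short two-step combinatorial deletion.
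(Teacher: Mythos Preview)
The paper does not supply a proof of this lemma; it is quoted as \cite[Lemma~2]{arm23} and used as a black box, so there is no in-paper argument to compare against. That said, your proposal is correct and follows the natural route (and, as far as one can tell, the one in \cite{arm23}): start from the B{\"o}r{\"o}czky--Wintsche covering of \cite{bor03} by caps of radius $\phi$ with pointwise multiplicity $M=O(n\log n)$, observe that $\theta(x,y)<\psi$ forces $x\in C(y,\phi)$ and $\theta(x,y)>\pi-\psi$ forces $-x\in C(y,\phi)$, so each center has at most $2M$ conflicts, and then extract an independent set of size $|X_0|/(2M+1)$ from the bounded-degree conflict graph. The polynomial loss from the Tur\'an/greedy step is absorbed into $(1+o(1))^n$, and the multiplicity bound is inherited under deletion, exactly as you say.
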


The main result of this note is the following theorem.

\begin{theorem}\label{thm:main}
For every positive integer $n$, there exists an $n$-dimensional body of constant width $K$ with illumination number at least $(\tau+o(1))^n$, where $\tau=\frac 1 4 \sqrt{\frac 1 6 (111-\sqrt{33})}\approx 1.047$.
\end{theorem}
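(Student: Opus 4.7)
The plan is to execute the construction scheme described before the theorem. Fix parameters $(R,d,\alpha,\beta)$, use Lemma~\ref{lem:prob} with $\phi=\pi/2-\alpha$ to produce a set of apexes $X_n\subset\mathbb{S}^{n-1}$ of cardinality $(\sec\alpha+o(1))^n$, apply Lemma~\ref{lem:diam} to see that $\mathcal{W}(X_n)$ has diameter $d$, enlarge it to a body of constant width $K_n$ of the same diameter by \cite[Theorem~54, p.~126]{egg58}, and combine Lemma~\ref{lem:illum} with the covering bound of Lemma~\ref{lem:prob} to conclude that the illumination number of $K_n$ is at least $|X_n|/O(n\log n)=(\sec\alpha+o(1))^n$. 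Everything reduces to maximizing $\alpha$ over admissible parameters.

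Solving the cone identities $d^2=1+R^2+2R\cos\beta$ and $d\sin\alpha=R\sin\beta$ gives $d=\sin\beta/\sin(\beta-\alpha)$ and $R=\sin\alpha/\sin(\beta-\alpha)$, so the edge case $2R\leq d$ of \eqref{cond:3} becomes $\sin\beta\geq 2\sin\alpha$. Staying in this edge case makes \eqref{cond:3} automatic, and a compatible $\psi<\pi/2-\alpha$ then exists iff $\beta<\pi/4-\alpha/2$ (from \eqref{cond:2}) and $d^2>2(1+\sin\alpha)$ (from \eqref{cond:1}, since $\theta(x,y)$ may range up to $\pi-\psi$). Maximizing $\alpha$ over the resulting admissible region $\{\arcsin(2\sin\alpha)\leq\beta\leq\pi/4-\alpha/2\}$ collapses the two boundaries at $\sin(\pi/4-\alpha/2)=2\sin\alpha$; squaring and using $2\sin^2(\pi/4-\alpha/2)=1-\sin\alpha$ this is the quadratic $8\sin^2\alpha+\sin\alpha-1=0$, whose positive root is $\sin\alpha^*=(\sqrt{33}-1)/16$. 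A direct check shows that $d^2>2(1+\sin\alpha^*)$ at this corner, so \eqref{cond:1} is strict there; and $\sin\alpha^*<1/2$, so $\alpha^*<\pi/6$ and Lemma~\ref{lem:illum} applies. Rationalizing $\sec^2\alpha^*=128/(111+\sqrt{33})=(111-\sqrt{33})/96=\tau^2$ yields $\sec\alpha^*=\tau$.

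For each $n$ choose $\alpha_n<\alpha^*$ with $\alpha_n\to\alpha^*$ and $\beta_n$ strictly inside the nonempty interval $(\arcsin(2\sin\alpha_n),\,\pi/4-\alpha_n/2)$. By continuity the induced values $R_n=\sin\alpha_n/\sin(\beta_n-\alpha_n)$ and $d_n=\sin\beta_n/\sin(\beta_n-\alpha_n)$ satisfy $2R_n<d_n$ and $d_n^2>2(1+\sin\alpha_n)$. Pick $\psi_n\in(\max(2\beta_n,\,2\arccos(d_n/2)),\,\pi/2-\alpha_n)$ and apply Lemma~\ref{lem:prob} to obtain $X_n$ of size $(\sec\alpha_n+o(1))^n$ with all pairwise spherical distances in $[\psi_n,\pi-\psi_n]$. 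For each pair $x,y\in X_n$, $2\sin(\theta(x,y)/2)\leq 2\cos(\psi_n/2)<d_n$, $\theta(x,y)\geq\psi_n>2\beta_n$, and $2R_n\leq d_n$ yield conditions \eqref{cond:1}--\eqref{cond:3} of Lemma~\ref{lem:diam}; hence $\operatorname{diam}\mathcal{W}(X_n)=d_n$, and Eggleston's theorem produces a body of constant width $K_n\supseteq\mathcal{W}(X_n)$ of diameter $d_n$.

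Each apex $x\in X_n$ lies on $\partial K_n$: otherwise an open ball around $x$ contained in $K_n$ would contain a point at distance exceeding $d_n$ from some point of the base of $Q(x)$, contradicting $\operatorname{diam}(K_n)=d_n$. Lemma~\ref{lem:illum} then forces every direction illuminating $x$ to lie in $C(-x,\pi/2-\alpha_n)$, so by Lemma~\ref{lem:prob} no direction illuminates more than $\max_{\ell}|X_n\cap C(-\ell,\pi/2-\alpha_n)|=O(n\log n)$ apexes. Hence the illumination number of $K_n$ is at least $|X_n|/O(n\log n)=(\sec\alpha_n+o(1))^n$, which tends to $(\tau+o(1))^n$ as $\alpha_n\uparrow\alpha^*$. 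The main difficulty is the algebraic optimization leading to $8\sin^2\alpha^*+\sin\alpha^*-1=0$; the remaining steps are continuity around the extremal corner and the standard boundary-point argument for the apexes.
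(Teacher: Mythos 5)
Your proposal is correct and follows the same construction and proof architecture as the paper: fix cone parameters, apply Lemma~\ref{lem:prob} to get an exponential set of apexes satisfying the hypotheses of Lemma~\ref{lem:diam}, enlarge to a constant width body via Eggleston's theorem, and bound illuminated apexes per direction using Lemma~\ref{lem:illum} together with the covering multiplicity from Lemma~\ref{lem:prob}. The optimal parameters you find (equivalent to $d=2R$, $2\beta+\alpha=\pi/2$, and hence $\tau=\sec\alpha^*$ with $8\sin^2\alpha^*+\sin\alpha^*-1=0$) coincide with the paper's $R_0,\beta_0,\alpha_0$, and your algebraic route to $\tau$ is actually a bit cleaner than the paper's computation in the variable $R$. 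You also supply two details the paper leaves implicit: an explicit argument that each apex lies on $\partial K_n$ (via the diameter constraint, rather than just asserting it), and a systematic justification that the corner of the admissible region in the $(\alpha,\beta)$-plane is where $\alpha$ is maximized, which the paper states without proof; and you handle the limiting case by sending $\alpha_n\uparrow\alpha^*$ rather than the paper's fixed-$\epsilon$ version with $\phi=\psi+\epsilon$, which yields the same asymptotic bound.
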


\section{Proofs}

\begin{proof}[Proof of Lemma \ref{lem:diam}]

Due to condition (\ref{cond:1}), distances between $x,y\in X$ are no greater than $d$.

Bases of spherical cones form spherical caps of spherical radius $\beta$ on the concentric sphere of radius $R$. For $x\in \mathbb{S}^{n-1}$ the interior of the corresponding spherical cap formed by the base of $Q(x)$ consists precisely of all points on the sphere of radius $R$ whose distance to $x$ is greater than $d$. In order to ensure that distances from points in $X$ to all cone bases are no greater than $d$, we need for spherical caps to be non-intersecting. This means that the spherical distance between their centers, that is $\theta(x,y)$, is at least $2\beta$, which is precisely condition (\ref{cond:2}).

Finally, we need to check that the distances between points in two different cone bases are at least $d$. If $d\geq 2R$, this condition is satisfied automatically because $2R$ is the diameter of the sphere of radius $R$. If the second part of condition (\ref{cond:3}) is satisfied, then the spherical distance between two furthest points in spherical caps $\theta(x,y)+2\beta$ is no greater than $2\arcsin \frac d {2R}$ so the Euclidean distance between them is no greater than $d$.
\end{proof}

The proof of Theorem \ref{thm:main} follows the steps of the proof of the main result in \cite{arm23}. We just need to choose suitable parameters for the cone.

We set $d=2R$ so that condition (\ref{cond:3}) is automatically satisfied. We also impose the condition $2\beta+\alpha=\pi/2$ so that the angle $\alpha$ is maximal possible. By the law of cosines, $\cos\alpha=\frac {3R^2+1} {4R}$ and $\cos\beta=\frac {3R^2-1}{2R}$. Using these values, we solve the equation $\sin(2\beta)=\cos\alpha$ for $R$ and find the only feasible root $R_0=\frac 1 3 \sqrt{\frac 1 2 (9+\sqrt{33})}\approx 0.905$. Then $d_0=2R_0$, $\beta_0=\arccos\left(\frac 1 4 \sqrt{\frac 1 2 (15+\sqrt{33})} \right)$, $\alpha_0=\frac {\pi} 2 - 2\beta_0$.

\begin{proof}[Proof of Theorem \ref{thm:main}]
We set $\psi=2\beta_0$ and $\phi=\psi+\epsilon$, for a small $\epsilon>0$. Take the collection $X$ of points in $\mathbb{S}^{n-1}$ constructed by Lemma \ref{lem:prob}. For this collection, we take the union of corresponding cones $\mathcal{W}(X)$ with parameters $\alpha_0$, $\beta_0$, $R_0$, $d_0$ each.

We first check that the condition $\psi\leq \theta(x,y)\leq \pi-\psi$ for every pair of points $x,y\in X$ implies all conditions of Lemma \ref{lem:diam}. Indeed, condition $2R\sin\beta\leq d$ and conditions (\ref{cond:2}) and (\ref{cond:3}) are satisfied automatically. Condition $\theta(x,y)/2 \leq \pi/2 - \beta_0$ implies

$$2\sin(\theta(x,y)/2) - d_0 \leq 2\cos\beta_0 -d_0= 2\frac {3R_0^2-1}{2R_0} - 2R_0 = \frac {R_0^2-1} {R_0} \leq 0,$$
because $R_0<1$. Therefore, condition (\ref{cond:1}) is also satisfied and, by Lemma \ref{lem:diam}, the diameter of $\mathcal{W}(X)$ is $d$.

By \cite[Theorem 54, p. 126]{egg58}, there exists $K$, a body of constant width $d_0$ containing $\mathcal{W}(X)$. For each $x\in X$, $x$ is on the boundary of $K$ and, by Lemma \ref{lem:illum}, the set of directions illuminating it belongs to $C(-x,\pi/2 - \alpha_0)$. Due to the construction of the set $X$ via Lemma \ref{lem:prob}, each direction vector $\ell$ may belong only to $O(n\log n)$ spherical caps $C(-x,\phi)$. Since $\phi=\pi/2-\alpha_0+\epsilon$, each direction may illuminate only $O(n \log n)$ apexes from $X$. The illumination number of $K$ is, therefore, at least
$$\frac {|X|} {O(n\log n)}  = \left(\frac {1+o(1)} {\sin\phi} \right)^n = (\cos(\alpha_0-\epsilon)+o(1))^{-n}.$$
The theorem is then proved for $\tau=(\cos\alpha_0)^{-1}=\frac 1 4 \sqrt{\frac 1 6 (111-\sqrt{33})}\approx 1.047$
\end{proof}

\bibliographystyle{amsalpha}

\begin{thebibliography}{A}

\bibitem{arm23}
{\sc A. Arman, A. Bondarenko, and A. Prymak.}
\emph{Convex bodies of constant width with exponential illumination number}, \url{https://arxiv.org/abs/2304.10418}.

\bibitem{bol60}
{\sc V.G. Boltyanski}.
\emph{The problem of illumination of the boundary of a convex body (in Russian)}, Izv. Mold. Fil. Akad. Nauk SSSR, no. 10 (1960): 79-86.

\bibitem{bor03}
{\sc K.J. B{\"o}r{\"o}czky and G. Wintsche.}
\emph{Covering the sphere by equal spherical balls.}
In Discrete and Computational Geometry, The Goodman-Pollack Festschrift (2003): 235-251.

\bibitem{egg58}
{\sc H. G. Eggleston.}
\emph{Convexity}, Cambridge Tracts in Mathematics and Mathematical Physics, No. 47, Cambridge
University Press, New York, 1958.

\bibitem{kal15}
{\sc G. Kalai.}
\emph{Some old and new problems in combinatorial geometry I: around Borsuk’s problem}, Surveys in
combinatorics 2015, London Math. Soc. Lecture Note Ser., vol. 424, Cambridge Univ. Press, Cambridge,
2015, 147-174.

\bibitem{sch88}
{\sc O. Schramm.}
\emph{Illuminating sets of constant width}, Mathematika 35 (1988), no. 2, 180-189.

\end{thebibliography}

\end{document}